\theoremstyle{plain}
\newtheorem{theorem}{Theorem}
\newtheorem{lemma}{Lemma}
\newtheorem{corollary}{Corollary}
\theoremstyle{definition}
\theoremstyle{remark}
\newtheorem{remark}{Remark}
\numberwithin{equation}{section} % to get equations numbered
\begin{document}
\title[On a system of difference equations ]{On a system of difference equations of third order solved in closed form} % please provide
                                % an abbreviated title

\author{Y. Akrour}
\address{Youssouf Akrour, LMAM Laboratory, Department of Mathematics, Mohamed Seddik Ben Yahia University-Jijel and Department des Sciences Exactes et d'Informatique, Ecole Normale Sup¨¦rieure-Constantine, Algeria.}
\email{youssouf.akrour@gmail.com}

\author{N. Touafek}
\address{Nouressadat Touafek, LMAM Laboratory, Department of Mathematics \\University of Mohamed Seddik Ben Yahia-Jijel \\ Algeria.}
\email{ntouafek@gmail.com}

\author{Y. Halim}
\address{Yacine Halim, Mila University Center\\ Department of Mathematics and Computer Science and LMAM Laboratory, University of Mohamed Seddik Ben Yahia-Jijel\\ Algeria.}
\email{halyacine@yahoo.fr}

\begin{abstract}
In this note we show the the system of difference equations
$$ x_{n+1}=\dfrac{ay_{n-2}x_{n-1}y_n+bx_{n-1}y_{n-2}+cy_{n-2}+d}{y_{n-2}x_{n-1}y_n},$$
$$y_{n+1}=\dfrac{ax_{n-2}y_{n-1}x_n+by_{n-1}x_{n-2}+cx_{n-2}+d}{x_{n-2}y_{n-1}x_n},$$
where $n\in \mathbb{N}_{0}$, the initial values $x_{-2}$, $x_{-1}$, $x_0$, $y_{-2}$, $y_{-1}$ and $y_0$ are arbitrary nonzero real numbers and the parameters $a$, $b$, $c$ and $d$ are arbitrary real numbers with $d\ne 0$, can be solved in a closed form.\\ We will see that when $a=b=c=d=1$ the solutions are expressed using the famous Teteranacci numbers.  In particular, the results obtained here extend those in our work \cite{arxiv}.
\end{abstract}

%\dedicatory{This paper is dedicated to Professor X on his 125th birthday.}

\subjclass[2010]{39A10, 40A05}

\keywords{System of difference equations, general solution, Tetranacci numbers.}

\maketitle

\section{Introduction}

Solving difference equations and their systems is a subject that attract the attention of several researchers and a big number of papers is devoted to this line of research where various models are proposed. We can consult for example the papers \cite{arxiv}-\cite{Yazlik Tollu Taskara:2013}, for some concrete  models of such equations and systems where also we can understands  the techniques used in solving these models.

Recently in \cite{arxiv} and as generalization of the equations and systems studied in \cite{azizi}, \cite{Halim Rabago:2018}, \cite{stevicejqtde2018}, \cite{Yazlik Tollu Taskara:2013}, we have solved in a closed form the system of difference equations
\begin{eqnarray}\label{system-tribonacci}
\begin{cases}
 x_{n+1}=\dfrac{ay_{n}x_{n-1}y_n+bx_{n-1}+c}{x_{n-1}y_n},\\
y_{n+1}=\dfrac{ax_{n}y_{n-1}x_n+by_{n-1}+c}{y_{n-1}x_n},
\end{cases}
\end{eqnarray}
Here and motivated by the above mentioned papers we show that we are able to expressed in a closed form the well defined solutions of following system of difference equations
\begin{eqnarray}\label{system-generalized}
\begin{cases}
 x_{n+1}=\dfrac{ay_{n-2}x_{n-1}y_n+bx_{n-1}y_{n-2}+cy_{n-2}+d}{y_{n-2}x_{n-1}y_n}, \\
y_{n+1}=\dfrac{ax_{n-2}y_{n-1}x_n+by_{n-1}x_{n-2}+cx_{n-2}+d}{x_{n-2}y_{n-1}x_n},
\end{cases}
\end{eqnarray}
where $n\in \mathbb{N}_{0}$, the initial values $x_{-2}$, $x_{-1}$, $x_0$, $y_{-2}$, $y_{-1}$ and $y_0$ are arbitrary nonzero real numbers and the parameters $a$, $b$, $c$ and $d$ are arbitrary real numbers with $d\ne 0$.

Clearly if $d=0$, then system \eqref{system-generalized} is nothing other than system \eqref{system-tribonacci}.  For the readers interested in the solutions of this system, we refer to \cite{arxiv}, where the system \eqref{system-tribonacci} was been completely solved.\\ Noting also that the system \eqref{system-generalized} can be seen as a generalization of the equation
\begin{equation}\label{azizieq}
 x_{n+1}=\dfrac{ax_{n-2}x_{n-1}x_n+bx_{n-1}x_{n-2}+cx_{n-2}+d}{x_{n-2}x_{n-1}x_n},\,n\in \mathbb{N}_{0}.
 \end{equation}
In fact the solutions of \eqref{azizieq} can be obtained from the solutions of \eqref{system-generalized} by choosing $y_{-i}=x_{-i},\,i=0,1,2$. The equation \eqref{azizieq} was been the subject of substantial part of the paper of Azizi \cite{azizi1}, which also motivated our present study.

We will see that the explicit formulas of the well defined solutions of system \eqref{system-generalized} are expressed using the terms of the sequence  $\left(J_{n}\right)_{n=0}^{+\infty}$ which are the solutions of the fourth order linear homogeneous difference equation defined by the relation
\begin{equation}\label{tribonacci_sequence_J(n)}
J_{n+4}=aJ_{n+3}+bJ_{n+2}+cJ_{n+1}+dJ_{n},\quad n\in \mathbb{N}_{0},
\end{equation}
and the special initial values
\begin{equation}
J_0=0, \quad J_1=0, \quad J_2=1 \; \text{and} \; J_3=a.
\end{equation}
Now we solve in closed form the equation \eqref{tribonacci_sequence_J(n)}. This equation (with the same or different initial values and parameters) was the subject of some papers in the literature \cite{waddill1992tetranacci}, \cite{hathiwala2017binet}.

The characteristic equation associated to the equation \eqref{tribonacci_sequence_J(n)} is

 \begin{equation}
\lambda^4-a\lambda^3-b\lambda^2-c\lambda-d=0   \label{eq_charac_of_GTS}
\end{equation}

and let $\alpha$, $\beta$, $\gamma$ and $\delta$  its four roots, then
\begin{equation} \label{relation racines}
\begin{cases}
\alpha + \beta + \gamma+\delta=a \\
\alpha \beta+\alpha \gamma+\alpha\delta+\beta \gamma+\beta \delta+\gamma \delta =-b \\
\alpha \beta \gamma+\alpha \beta \delta+\alpha \gamma \delta+\beta\gamma\delta =c\\
\alpha \beta \gamma \delta=-d
\end{cases}
\end{equation}

We have:

\textbf{Case 1: If all roots are real and equal.} In this case
\begin{equation*}
J_n=\left(  c_{1}+c_{2}n+c_{3}n^2+c_{4}n^3 \right) \alpha^n.
\end{equation*}
Now using \eqref{relation racines} and the fact that $J_{0}=0$, $J_{1}=0$, $J_{2}=1$ and $J_{3}=a$, we obtain
\begin{equation}  \label{binet_formula1_of_J(n)}
J_n=\left( \dfrac{-n+n^3}{6\alpha^2}\right) \alpha^n
\end{equation}

\textbf{Case 2: If three roots are real and equal, say $\beta=\gamma=\delta$.} In this case
\begin{equation*}
J_n=c_{1} \alpha^n+\left( c_2 + c_{3}n+c_{4}n^2 \right)\beta^n.
\end{equation*}
Now using \eqref{relation racines} and the fact that $J_{0}=0$, $J_{1}=0$, $J_{2}=1$ and $J_{3}=a$, we obtain
\begin{equation} \label{binet_formula2_of_J(n)}
J_n=\dfrac{-\alpha}{(\beta-\alpha)^3} \alpha^n+\left( \dfrac{\alpha}{(\beta-\alpha)^3} - \dfrac{n(\alpha+\beta)}{2\beta
(\beta-\alpha)^2}+\dfrac{n^2}{2\beta(\beta-\alpha)}\right)\beta^n,
\end{equation}

\textbf{Case 3: If two real roots are equal, say $\gamma=\delta$.} In this case
\begin{equation*}
J_n=c_{1} \alpha^n+c_2 \beta^n+ \left(c_{3}+c_{4}n\right)\gamma^n.
\end{equation*}
Now using \eqref{relation racines} and the fact that $J_{0}=0$, $J_{1}=0$, $J_{2}=1$ and $J_{3}=a$, we obtain
\begin{equation} \label{binet_formula3_of_J(n)}
J_n=\dfrac{-\alpha}{(\gamma-\alpha)^2(\beta-\alpha)} \alpha^n+\dfrac{\beta}{(\gamma-\beta)^2(\beta-\alpha)}\beta^n +\left(  \dfrac{\alpha \beta-\gamma^2}{(\gamma-\alpha)^2(\gamma-\beta)^2}+\dfrac{n}{(\gamma-\alpha)(\gamma-\beta)}\right)\gamma^n,
\end{equation}

\textbf{Case 4: If double two real roots are equal, say $\alpha=\beta \neq \gamma=\delta$.} In this case
\begin{equation*}
J_n=\left( c_{1} +c_{2} n \right) \alpha^n+ \left(c_{3}+c_{4} n \right)\gamma^n.
\end{equation*}
Now using \eqref{relation racines} and the fact that $J_{0}=0$, $J_{1}=0$, $J_{2}=1$ and $J_{3}=a$, we obtain
\begin{equation} \label{binet_formula4_of_J(n)}
J_n=\left(\dfrac{\gamma+\alpha}{(\gamma-\alpha)^3}+\dfrac{n}{(\gamma-\alpha)^2}\right)\alpha^n+\left(-\dfrac{\gamma+\alpha}{(\gamma-\alpha)^3}+\dfrac{n}{(\gamma-\alpha)^2}\right)\gamma^n,
\end{equation}

\textbf{Case 5: If the roots are all real and different.} In this case
\begin{equation*}
J_n=c_{1} \alpha^n+c_{2}\beta^n+c_{3}\gamma^{n}+c_{4}\delta^n.
\end{equation*}
Again, using \eqref{relation racines} and the fact that $J_{0}=0$, $J_{1}=0$,  $J_{2}=1$ and $J_{3}=a$, we obtain
\begin{eqnarray}\label{binet_formula5_of_J(n)}
J_n&=&\dfrac{-\alpha}{(\delta-\alpha)(\gamma-\alpha)(\beta-\alpha)} \alpha^n +\dfrac{\beta}{(\delta-\beta)(\gamma-\beta)(\beta-\alpha)} \beta^n +\dfrac{-\gamma}{(\delta-\gamma)(\gamma-\beta)(\gamma-\alpha)} \gamma^n \nonumber \\
&&+\dfrac{\delta}{(\delta-\gamma)(\delta-\beta)(\delta-\alpha)} \delta^n
\end{eqnarray}

\textbf{Case 6: If two real roots are equal, say $\alpha=\beta$ and two roots are complex conjugate ones, say $\delta=\overline{\gamma}$.} In this case
\begin{equation*}
J_n=(c_{1}+c_{2} n) \alpha^n+c_{3}\gamma^{n}+c_{4}\overline{\gamma}^n.
\end{equation*}
Again, using \eqref{relation racines} and the fact that $J_{0}=0$, $J_{1}=0$,  $J_{2}=1$ and $J_{3}=a$, we obtain
\begin{eqnarray}\label{binet_formula6_of_J(n)}
J_n&=&\left(\dfrac{\overline{\gamma}\gamma-\alpha^2}{(\overline{\gamma}-\alpha)^2(\gamma-\alpha)^2} +\dfrac{n}{(\overline{\gamma}-\alpha)(\gamma-\alpha)} \right) \alpha^n+\dfrac{-\gamma}{(\overline{\gamma} -\gamma)(\gamma-\alpha)^2} \gamma^n \nonumber \\
&&+\dfrac{\overline{\gamma}}{(\overline{\gamma}-\gamma)(\overline{\gamma}-\alpha)^2} \overline{\gamma}^n
\end{eqnarray}

\textbf{Case 7: If two  real roots $\alpha$, $\beta$ are different and two roots are complex conjugate ones, say $\delta=\overline{\gamma}$.} In this case
\begin{equation*}
J_n=c_{1} \alpha^n +c_{2} \beta^n+c_{3}\gamma^{n}+c_{4}\overline{\gamma}^n.
\end{equation*}
Again, using \eqref{relation racines} and the fact that $J_{0}=0$, $J_{1}=0$,  $J_{2}=1$ and $J_{3}=a$, we obtain
\begin{eqnarray}\label{binet_formula7_of_J(n)}
J_n&=&\dfrac{-\alpha}{(\overline{\gamma}-\alpha)(\gamma-\alpha)(\beta-\alpha)} \alpha^n +\dfrac{\beta}{(\overline{\gamma}-\beta)(\gamma-\beta)(\beta-\alpha)} \beta^n +\dfrac{-\gamma}{(\overline{\gamma}-\gamma)(\gamma-\beta)(\gamma-\alpha)} \gamma^n \nonumber \\
&&+\dfrac{\overline{\gamma}}{(\overline{\gamma}-\gamma)(\overline{\gamma}-\beta)(\overline{\gamma}-\alpha)} \overline{\gamma}^n
\end{eqnarray}

\textbf{Case 8: If two  complex roots are equal, say $\alpha=\gamma$ and  $\beta=\delta=\overline{\alpha}$.} In this case
\begin{equation*}
J_n=(c_{1}  +c_{2} n)\alpha^n +(c_{3}+c_{4} n )\overline{\alpha}^n.
\end{equation*}
Again, using \eqref{relation racines} and the fact that $J_{0}=0$, $J_{1}=0$,  $J_{2}=1$ and $J_{3}=a$, we obtain
\begin{equation}\label{binet_formula8_of_J(n)}
J_n=\left( \dfrac{\overline{\alpha}+ \alpha}{(\overline{\alpha}-\alpha)^3} + \dfrac{n}{(\overline{\alpha}-\alpha)^2} \right) \alpha^n + \left( \dfrac{-\overline{\alpha}- \alpha}{(\overline{\alpha}-\alpha)^3} + \dfrac{n}{(\overline{\alpha}-\alpha)^2} \right) \overline{\alpha}^n
\end{equation}

\textbf{Case 9: If the roots are all complex and different, say $\beta=\overline{\alpha}$ and $\delta=\overline{\gamma}$.} In this case
\begin{equation*}
J_n=c_{1} \alpha^n+c_{2}\overline{\alpha}^n+c_{3}\gamma^{n}+c_{4}\overline{\gamma}^n.
\end{equation*}
Again, using \eqref{relation racines} and the fact that $J_{0}=0$, $J_{1}=0$,  $J_{2}=1$ and $J_{3}=a$, we obtain
\begin{eqnarray}\label{binet_formula9_of_J(n)}
J_n&=&\dfrac{-\alpha}{(\overline{\gamma}-\alpha)(\gamma-\alpha)(\overline{\alpha}-\alpha)} \alpha^n +\dfrac{\overline{\alpha}}{(\overline{\gamma}-\overline{\alpha})(\gamma-\overline{\alpha})(\overline{\alpha}-\alpha)} \overline{\alpha}^n +\dfrac{-\gamma}{(\overline{\gamma}-\gamma)(\gamma-\overline{\alpha})(\gamma-\alpha)} \gamma^n \nonumber \\
&&+\dfrac{\overline{\gamma}}{(\overline{\gamma}-\gamma)(\overline{\gamma}-\overline{\alpha})(\overline{\gamma}-\alpha)} \overline{\gamma}^n
\end{eqnarray}

%%%%%%%%%%%%%%%%%%%%%%%%%%%%%%%%%%%%%%%%%%%%%%%%%%%%%%%%%%%%%%%%%%%%%%%%%%%%%%%%%%%%%%%%%%%%%%%%%%%%%%%%%%%%%%%%%%%%%%%
\section{The main theorem and some particular cases}
Here, we give a closed form for the well defined solutions of the system \eqref{system-generalized} with $d\neq 0$.  To this end we will use the same change of variables as in \cite{arxiv} to transform  the system \eqref{system-generalized} to a linear one and than following the same procedure as in \cite{arxiv} to obtain the closed form of the solutions. To get the solutions of the corresponding linear system we need to solve some fourth order linear difference equations. In particular, we derive from the main result (Main Theorem), for which we leave the proof to the next section,  the solutions of some particular systems and equations where their solutions are related to the famous Tetranacci numbers.\\
We recall that by a well defined solutions of system \eqref{system-generalized}, we mean a solution that satisfies $x_{n}y_{n}\neq 0,\,n\geq -2$. The set of well defined solutions is not empty, in fact it suffices to choose the initial values and the parameters $a$, $b$, $c$ and $d$ positive, to see that every solution of \eqref{system-generalized} will be well defined.
\subsection{Closed form of well defined solutions of the system \eqref{system-generalized}}

The following result give an explicit formula for well defined solutions of the system \eqref{system-generalized}.

\begin{theorem} (Main Theorem.) \label{solution_form_of_x(n)_and_y(n)}
Let $\{x_n, y_n\}_{n\geq -1}$ be a well defined solution of
\eqref{system-generalized}. Then, for $n \in \mathbb{N}_0$, we have

\begin{equation*}
x_{2n+1}=\dfrac{dJ_{2n+2}+\left( cJ_{2n+2}+dJ_{2n+1} \right) y_{-2} +\left( J_{2n+4}-aJ_{2n+3} \right) x_{-1}y_{-2} +J_{2n+3} y_{0}x_{-1}y_{-2} }{ dJ_{2n+1} +\left( cJ_{2n+1}+dJ_{2n} \right) y_{-2} +\left( J_{2n+3}-aJ_{2n+2} \right) x_{-1}y_{-2}+J_{2n+2} y_{0}x_{-1}y_{-2}} ,\\
\end{equation*}
\begin{equation*}
x_{2n+2}=\dfrac{dJ_{2n+3}+\left( cJ_{2n+3}+dJ_{2n+2} \right) x_{-2} +\left( J_{2n+5}-aJ_{2n+4} \right) y_{-1}x_{-2} +J_{2n+4}x_{0}y_{-1}x_{-2}}{dJ_{2n+2}+\left( cJ_{2n+2}+dJ_{2n+1} \right) x_{-2} +\left( J_{2n+4}-aJ_{2n+3} \right) y_{-1}x_{-2} +J_{2n+3}x_{0}y_{-1}x_{-2} },
\end{equation*}
\begin{equation*}
y_{2n+1}=\dfrac{dJ_{2n+2}+\left( cJ_{2n+2}+dJ_{2n+1} \right) x_{-2} +\left( J_{2n+4}-aJ_{2n+3} \right) y_{-1}x_{-2} +J_{2n+3}x_{0}y_{-1}x_{-2}}{dJ_{2n+1}+\left( cJ_{2n+1}+dJ_{2n} \right) x_{-2} +\left( J_{2n+3}-aJ_{2n+2} \right) y_{-1}x_{-2}+J_{2n+2} x_{0}y_{-1}x_{-2} },\\
\end{equation*}
\begin{equation*}
y_{2n+2}=\dfrac{dJ_{2n+3}+\left( cJ_{2n+3}+dJ_{2n+2} \right) y_{-2} +\left( J_{2n+5}-aJ_{2n+4} \right) x_{-1}y_{-2} +J_{2n+4} y_{0}x_{-1}y_{-2}}{dJ_{2n+2}+\left( cJ_{2n+2}+dJ_{2n+1} \right) y_{-2} +\left( J_{2n+4}-aJ_{2n+3} \right) x_{-1}y_{-2}+J_{2n+3} y_{0}x_{-1}y_{-2}}.
\end{equation*}

where the initial values $x_{-2}, x_{-1},x_{0}, y_{-2},  y_{-1}$ and $y_{0} \in
\left(\mathbb{R}-\left\{0\right\}\right)-F$, with $F$ is the Forbidden set of system
\eqref{system-generalized} given by
\begin{equation*}
F=\bigcup_{n=0}^\infty \left\{(x_{-2}, x_{-1},x_0, y_{-2}, y_{-1},
y_0)\in \left(\mathbb{R}-\left\{0\right\}\right) :A_n=0 \,\text{or} \,B_n=0\right\},
\end{equation*}
where
\begin{eqnarray*}
A_n=dJ_{n+1}+\left( cJ_{n+1}+dJ_{n} \right) y_{-2} +\left( J_{n+3}-aJ_{n+2} \right) x_{-1}y_{-2}+J_{n+2} y_{0}x_{-1}y_{-2},\\
B_n=dJ_{n+1} +\left( cJ_{n+1}+dJ_{n} \right) x_{-2} +\left( J_{n+3}-aJ_{n+2} \right) y_{-1}x_{-2}+J_{n+2} x_{0}y_{-1}x_{-2}.
\end{eqnarray*}
 \end{theorem}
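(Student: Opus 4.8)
The plan is to collapse the two-dimensional system \eqref{system-generalized} onto a single scalar equation of the type \eqref{azizieq}, linearize that equation, and then recover the closed form from the fundamental sequence $(J_n)$. First I would rewrite each equation in the additive form
\[
x_{n+1}=a+\frac{b}{y_n}+\frac{c}{x_{n-1}y_n}+\frac{d}{y_{n-2}x_{n-1}y_n},\qquad
y_{n+1}=a+\frac{b}{x_n}+\frac{c}{y_{n-1}x_n}+\frac{d}{x_{n-2}y_{n-1}x_n}.
\]
The crucial observation is that the two equations interlock across parities. Defining the interleaved sequence $X_m$ by $X_{2k+1}=x_{2k+1}$ and $X_{2k}=y_{2k}$, a short computation (using the $x$-equation when $m$ is even and the $y$-equation when $m$ is odd) shows that in both cases
\[
X_{m+1}=a+\frac{b}{X_m}+\frac{c}{X_{m-1}X_m}+\frac{d}{X_{m-2}X_{m-1}X_m},
\]
which is exactly \eqref{azizieq}, with data $X_{-2}=y_{-2}$, $X_{-1}=x_{-1}$, $X_0=y_0$. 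The companion sequence $Y_m$ given by $Y_{2k+1}=y_{2k+1}$, $Y_{2k}=x_{2k}$ satisfies the same equation (with $x\leftrightarrow y$) and data $Y_{-2}=x_{-2}$, $Y_{-1}=y_{-1}$, $Y_0=x_0$. This decouples the problem into two identical scalar problems.

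Next I would linearize. Substituting $X_m=t_m/t_{m-1}$ and clearing denominators makes the products telescope, collapsing the scalar equation to the fourth-order \emph{linear} recurrence $t_{m+1}=a t_m+b t_{m-1}+c t_{m-2}+d t_{m-3}$, which is precisely \eqref{tribonacci_sequence_J(n)}. Fixing the scaling by $t_{-3}=1$ forces $t_{-2}=y_{-2}$, $t_{-1}=x_{-1}y_{-2}$, $t_0=y_0x_{-1}y_{-2}$, exactly the products of initial values appearing in $A_m$. The analogous substitution $Y_m=s_m/s_{m-1}$ with $s_{-3}=1$ yields the data $s_{-2}=x_{-2}$, $s_{-1}=y_{-1}x_{-2}$, $s_0=x_0y_{-1}x_{-2}$ entering $B_m$.

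The heart of the argument, and the step I expect to be the main obstacle, is to identify the solution $t_m$ explicitly with $A_m$. Since $A_m$ is a fixed linear combination of the shifts $J_m,J_{m+1},J_{m+2},J_{m+3}$, each of which solves \eqref{tribonacci_sequence_J(n)}, the sequence $A_m$ automatically satisfies the same linear recurrence; it then suffices to check that it matches the four initial values at $m=-3,-2,-1,0$. This forces me to extend $(J_n)$ backward via $J_n=d^{-1}\bigl(J_{n+4}-aJ_{n+3}-bJ_{n+2}-cJ_{n+1}\bigr)$, obtaining $J_{-1}=0$, $J_{-2}=1/d$, $J_{-3}=-c/d^2$, followed by a careful but elementary verification that $A_{-3}=1$, $A_{-2}=y_{-2}$, $A_{-1}=x_{-1}y_{-2}$, $A_0=y_0x_{-1}y_{-2}$. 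By uniqueness of a fourth-order recurrence determined by four consecutive values, $A_m=t_m$, and symmetrically $B_m=s_m$.

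Finally, back-substitution closes the proof. From $X_m=t_m/t_{m-1}=A_m/A_{m-1}$, reading off odd and even indices yields $x_{2n+1}=A_{2n+1}/A_{2n}$ and $y_{2n+2}=A_{2n+2}/A_{2n+1}$, while $Y_m=B_m/B_{m-1}$ gives $y_{2n+1}=B_{2n+1}/B_{2n}$ and $x_{2n+2}=B_{2n+2}/B_{2n+1}$; these are precisely the four claimed formulas. A solution is well defined exactly when every ratio is legitimate and nonzero, i.e. when no $A_n$ or $B_n$ vanishes, which is the stated description of the forbidden set $F$.
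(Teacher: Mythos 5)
Your proof is correct, but it follows a genuinely different route from the paper's. The paper linearizes first, via $x_n=u_n/v_{n-1}$, $y_n=v_n/u_{n-1}$, which yields the coupled linear system \eqref{sys_linear}, and then decouples by the sum/difference substitution $R_n=u_n+v_n$, $S_n=u_n-v_n$; the price is the alternating-sign recurrence \eqref{third-order_equation_S(n)}, the auxiliary sequence \eqref{tribonacci_sequence_j(n)} with its $(-1)^n$ bookkeeping, and a proof of Lemma \ref{lemma_R(n),S(n)_by_J(n)} by Cramer's rule carried out only for distinct characteristic roots (``the other cases \dots\ similar and will be omitted''). You decouple first, inside the nonlinear system, by the parity interleaving $X_{2k}=y_{2k}$, $X_{2k+1}=x_{2k+1}$ (and its mirror $Y_m$), reducing \eqref{system-generalized} to two independent copies of the scalar equation \eqref{azizieq}, and only then linearize each copy by $X_m=t_m/t_{m-1}$. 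This buys two real simplifications: the alternating-sign equation never appears, and your identification $t_m=A_m$ is root-free---since $A_m$ is a fixed linear combination of the shifts $J_m,\dots,J_{m+3}$, it automatically satisfies \eqref{tribonacci_sequence_J(n)}, so only the four values $A_{-3}=1$, $A_{-2}=y_{-2}$, $A_{-1}=x_{-1}y_{-2}$, $A_0=y_0x_{-1}y_{-2}$ need checking, using the backward extension $J_{-1}=0$, $J_{-2}=1/d$, $J_{-3}=-c/d^2$ (legitimate precisely because $d\neq 0$)---no Cramer computation and no case analysis over the nine root configurations. What the paper's route retains is the explicit representation of both $R_n$ and $S_n$ (Lemma \ref{lemma_R(n),S(n)_by_J(n)}), which has some independent interest, and methodological continuity with \cite{arxiv}, whose technique it deliberately reuses. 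Two points you should make explicit when writing this up: the interleaving claim needs its short parity verification (for $m=2k$ apply the $x$-equation of \eqref{system-generalized} with $n=2k$, noting $X_m=y_{2k}$, $X_{m-1}=x_{2k-1}$, $X_{m-2}=y_{2k-2}$; for $m=2k+1$ apply the $y$-equation with $n=2k+1$), since this is the crux of the decoupling; and the substitution $X_m=t_m/t_{m-1}$ should be set up constructively ($t_{-3}=1$, $t_m=X_m t_{m-1}$, all nonzero because the solution is well defined), after which your telescoping computation genuinely converts the scalar equation into the linear recurrence for $t_m$.
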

\subsection{Particular cases} Now, we focus our study on some particular cases of system \eqref{system-generalized}.
\subsubsection{The solutions of the equation $ x_{n+1}=\dfrac{ax_{n-2}x_{n-1}x_n+bx_{n-1}x_{n-2}+cx_{n-2}+d}{x_{n-2}x_{n-1}x_n}$}

If we choose  $y_{-2}=x_{-2}$, $y_{-1}=x_{-1}$ and $y_0=x_0$, then system \eqref{system-generalized} is reduced to the equation
\begin{equation}\label{equation-generalized}
 x_{n+1}=\dfrac{ax_{n-2}x_{n-1}x_n+bx_{n-1}x_{n-2}+cx_{n-2}+d}{x_{n-2}x_{n-1}x_n},  \; \; n\in \mathbb{N}_0.
\end{equation}
So, it follows from the Main Theorem that

\begin{corollary} \label{solution_form_of_equation_generalized}
Let $\{ x_n \}_{n\geq -1}$ be a well defined solution of the equation \eqref{equation-generalized}. Then for $n \in \mathbb{N}_0$, we have

\begin{equation*}
x_{2n+1}=\dfrac{dJ_{2n+2}+\left( cJ_{2n+2}+dJ_{2n+1} \right) x_{-2} +\left( J_{2n+4}-aJ_{2n+3} \right) x_{-1}x_{-2} +J_{2n+3} x_{0}x_{-1}x_{-2} }{dJ_{2n+1} +\left( cJ_{2n+1}+dJ_{2n} \right) x_{-2} +\left( J_{2n+3}-aJ_{2n+2} \right) x_{-1}x_{-2}+J_{2n+2} x_{0}x_{-1}x_{-2}} ,
\end{equation*}

\begin{equation*}
x_{2n+2}=\dfrac{dJ_{2n+3}+\left( cJ_{2n+3}+dJ_{2n+2} \right) x_{-2} +\left( J_{2n+5}-aJ_{2n+4} \right) x_{-1}x_{-2} +J_{2n+4}x_{0}x_{-1}x_{-2}}{dJ_{2n+2}+\left( cJ_{2n+2}+dJ_{2n+1} \right) x_{-2} +\left( J_{2n+4}-aJ_{2n+3} \right) x_{-1}x_{-2} +J_{2n+3}x_{0}x_{-1}x_{-2} },
\end{equation*}
\end{corollary}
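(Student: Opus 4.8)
The plan is to derive Corollary~\ref{solution_form_of_equation_generalized} directly from the Main Theorem by specializing the initial data, so that no fresh computation is needed. First I would justify that the choice $y_{-i}=x_{-i}$, $i=0,1,2$, genuinely reduces \eqref{system-generalized} to the single equation \eqref{equation-generalized}. The key observation is that the system is invariant under the interchange $x\leftrightarrow y$: replacing $x$ by $y$ and $y$ by $x$ in the first equation of \eqref{system-generalized} produces exactly the second. Hence, if the initial values satisfy $x_{-i}=y_{-i}$ for $i=0,1,2$, a straightforward induction on $n$ shows $x_n=y_n$ for every $n\ge -2$: assuming $x_k=y_k$ for $-2\le k\le n$ and substituting $y_k=x_k$ into both equations of \eqref{system-generalized} yields the same right-hand side, namely that of \eqref{equation-generalized}, so $x_{n+1}=y_{n+1}$ and both coincide with the value prescribed by \eqref{equation-generalized}. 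Consequently the first component of the well-defined solution of \eqref{system-generalized} is precisely the well-defined solution of \eqref{equation-generalized} with the same initial values.

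Once this reduction is in place, the closed form is obtained by substituting $y_{-2}=x_{-2}$, $y_{-1}=x_{-1}$ and $y_0=x_0$ into the two formulas for $x_{2n+1}$ and $x_{2n+2}$ furnished by the Main Theorem. Under this substitution the products appearing there collapse to $x_{-1}x_{-2}$ and $x_0x_{-1}x_{-2}$, while each coefficient of the $J_k$'s is left unchanged, so the expression for $x_{2n+1}$ becomes exactly the first displayed formula of the Corollary and that for $x_{2n+2}$ becomes the second. Equivalently, in the notation of the Main Theorem one checks that $A_n=B_n$ as soon as $y_{-i}=x_{-i}$, which is precisely why the two separate families of ratios governing the even- and odd-indexed terms merge into a single family.

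I would close by noting that the forbidden set specializes accordingly: with $y_{-i}=x_{-i}$ the quantities $A_n$ and $B_n$ coincide, so the single requirement that every denominator $A_n$ be nonzero guarantees that the solution is well defined, and this is inherited verbatim from the forbidden set $F$ of the Main Theorem.

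I do not anticipate a genuine obstacle here, since the statement is a specialization rather than an independent result; the only point requiring care is the consistency argument of the first paragraph, ensuring that restricting to symmetric initial data does not merely produce a formal substitution but actually realizes the scalar equation \eqref{equation-generalized} as the diagonal of the system. Everything else reduces to a direct substitution into the formulas already established for \eqref{system-generalized}.
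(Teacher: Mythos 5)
Your proposal is correct and follows exactly the route the paper takes: the corollary is obtained by setting $y_{-2}=x_{-2}$, $y_{-1}=x_{-1}$, $y_0=x_0$, observing that the system \eqref{system-generalized} then reduces to equation \eqref{equation-generalized}, and substituting these values into the formulas of the Main Theorem. The only difference is that you spell out the symmetry-and-induction argument showing $x_n=y_n$ for all $n$, a detail the paper leaves implicit; this is a welcome clarification but not a different method.
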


Noting that this equation was the subject of a substantial part of the paper of Azizi in \cite{azizi}.

\subsection{The solutions of the system \eqref{system-generalized} with $ a=b=c=d=1$}

Consider the system
\begin{eqnarray}\label{system:1}
\begin{cases}
 x_{n+1}=\dfrac{y_{n-2}x_{n-1}y_n+x_{n-1}y_{n-2}+y_{n-2}+1}{y_{n-2}x_{n-1}y_n}, \\
y_{n+1}=\dfrac{x_{n-2}y_{n-1}x_n+y_{n-1}x_{n-2}+x_{n-2}+1}{x_{n-2}y_{n-1}x_n},\; n \in \mathbb{N}_0,
\end{cases}
\end{eqnarray}
which is a is particular case of the system \eqref{system-generalized} with $a=b=c=d=1$. In this case the sequence  $\{J_n \}$ is nothing other than the sequence of Tetranacci numbers $\{T_n \}$, that is
\begin{equation*}
T_{n+4}=T_{n+3}+T_{n+2}+T_{n+1}+T_{n},\, n\in \mathbb{N}_{0},\; \text{ where } \; T_0=T_1=0, \;T_2=1 \; \text{and} \; T_3=1,
\end{equation*}
and we have
\begin{eqnarray*}
T_n&=&\dfrac{-\alpha}{(\overline{\gamma}-\alpha)(\gamma-\alpha)(\beta-\alpha)} \alpha^n +\dfrac{\beta}{(\overline{\gamma}-\beta)(\gamma-\beta)(\beta-\alpha)} \beta^n +\dfrac{-\gamma}{(\overline{\gamma}-\gamma)(\gamma-\beta)(\gamma-\alpha)} \gamma^n \nonumber \\
&&+\dfrac{\overline{\gamma}}{(\overline{\gamma}-\gamma)(\overline{\gamma}-\beta)(\overline{\gamma}-\alpha)} \overline{\gamma}^n, \qquad  n \in \mathbb{N}_0,
\end{eqnarray*}
with
$$\alpha=\dfrac{1}{4}+\dfrac{1}{2}\omega+\dfrac{1}{2}\sqrt{\dfrac{11}{4}-\omega^2+\dfrac{13}{4}\omega^{-1}},\,
\beta=\dfrac{1}{4}+\dfrac{1}{2}\omega-\dfrac{1}{2}\sqrt{\dfrac{11}{4}-\omega^2+\dfrac{13}{4}\omega^{-1}},$$
$$\gamma=\dfrac{1}{4}-\dfrac{1}{2}\omega+\dfrac{1}{2}\sqrt{\dfrac{11}{4}-\omega^2-\dfrac{13}{4}\omega^{-1}},\,
\delta=\dfrac{1}{4}-\dfrac{1}{2}\omega-\dfrac{1}{2}\sqrt{\dfrac{11}{4}-\omega^2-\dfrac{13}{4}\omega^{-1}},$$
$$\omega=\sqrt{\dfrac{11}{12}+\left(\dfrac{-65}{54}+\sqrt{\dfrac{563}{108}} \right)^{\dfrac{1}{3}}+\left(\dfrac{-65}{54}-\sqrt{\dfrac{563}{108}} \right)^{\dfrac{1}{3}}}.$$
Numerically we have $\alpha=1.927561975$, $\beta=-0.774804113$ and the two complex conjugate are
$\gamma= -0.076378931+ 0.814703647i ,\, \delta=\bar{\gamma} \; \text{with} \; i^{2}=-1$.

The one dimensional version of the system \eqref{system:1}, is the equation
\begin{equation}\label{equation_tribonacci}
 x_{n+1}=\dfrac{x_{n-2}x_{n-1}x_n+x_{n-1}x_{n-2}+x_{n-2}+1}{x_{n-2}x_{n-1}x_n}, \; \; n\in \mathbb{N}_0.
\end{equation}
The following results follows respectively from the Main Theorem.
\begin{corollary}
Let $\{x_n, y_n\}_{n\geq -1}$ be a well defined solution of
\eqref{system:1}. Then, for $n \in \mathbb{N}_0$, we have

\begin{equation*}
x_{2n+1}=\dfrac{T_{2n+2}+\left( T_{2n+2}+T_{2n+1} \right) y_{-2} +\left( T_{2n+4}-T_{2n+3} \right) x_{-1}y_{-2} +T_{2n+3} y_{0}x_{-1}y_{-2} }{ T_{2n+1} +\left( T_{2n+1}+T_{2n} \right) y_{-2} +\left( T_{2n+3}-T_{2n+2} \right) x_{-1}y_{-2}+T_{2n+2} y_{0}x_{-1}y_{-2}} ,\\
\end{equation*}
\begin{equation*}
x_{2n+2}=\dfrac{T_{2n+3}+\left( T_{2n+3}+T_{2n+2} \right) x_{-2} +\left( T_{2n+5}-T_{2n+4} \right) y_{-1}x_{-2} +T_{2n+4}x_{0}y_{-1}x_{-2}}{T_{2n+2}+\left( T_{2n+2}+T_{2n+1} \right) x_{-2} +\left( T_{2n+4}-T_{2n+3} \right) y_{-1}x_{-2} +T_{2n+3}x_{0}y_{-1}x_{-2} },
\end{equation*}
\begin{equation*}
y_{2n+1}=\dfrac{T_{2n+2}+\left( T_{2n+2}+T_{2n+1} \right) x_{-2} +\left( T_{2n+4}-T_{2n+3} \right) y_{-1}x_{-2} +T_{2n+3}x_{0}y_{-1}x_{-2}}{T_{2n+1}+\left( T_{2n+1}+T_{2n} \right) x_{-2} +\left( T_{2n+3}-T_{2n+2} \right) y_{-1}x_{-2}+T_{2n+2} x_{0}y_{-1}x_{-2} },\\
\end{equation*}
\begin{equation*}
y_{2n+2}=\dfrac{T_{2n+3}+\left( T_{2n+3}+T_{2n+2} \right) y_{-2} +\left( T_{2n+5}-T_{2n+4} \right) x_{-1}y_{-2} +T_{2n+4} y_{0}x_{-1}y_{-2}}{T_{2n+2}+\left( T_{2n+2}+T_{2n+1} \right) y_{-2} +\left( T_{2n+4}-T_{2n+3} \right) x_{-1}y_{-2}+T_{2n+3} y_{0}x_{-1}y_{-2}}.
\end{equation*}
\end{corollary}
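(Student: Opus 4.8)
The plan is to obtain this corollary as an immediate specialization of the Main Theorem to the parameter values $a=b=c=d=1$. Essentially all of the analytic content has already been established there, so the task reduces to a bookkeeping substitution together with the identification of the auxiliary sequence $\{J_n\}$ in this particular case. I would open the argument by recording that it is the Main Theorem that does the real work, and that the present statement is obtained from it by evaluating the parameters and renaming a sequence.

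First I would observe that system \eqref{system:1} is literally system \eqref{system-generalized} evaluated at $a=b=c=d=1$, so that any well defined solution of \eqref{system:1} is a well defined solution of \eqref{system-generalized} for these parameter values. Consequently the hypotheses of the Main Theorem apply verbatim, and the associated forbidden set specializes accordingly; no new well-definedness analysis is required beyond substituting the parameters into $A_n$ and $B_n$.

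The key step is to identify the sequence $\{J_n\}$ when $a=b=c=d=1$. In that case the defining relation \eqref{tribonacci_sequence_J(n)} becomes $J_{n+4}=J_{n+3}+J_{n+2}+J_{n+1}+J_{n}$, which is exactly the Tetranacci recurrence; moreover the prescribed initial data $J_0=J_1=0$, $J_2=1$, $J_3=a=1$ coincide with $T_0=T_1=0$, $T_2=T_3=1$. Since a fourth-order linear homogeneous recurrence is uniquely determined by any four consecutive values, it follows by a trivial induction that $J_n=T_n$ for every $n\in\mathbb{N}_0$.

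Finally I would substitute $a=b=c=d=1$ and $J_n=T_n$ into the four formulas of the Main Theorem. Each occurrence of $d$, $c$ and $a$ collapses to $1$, so that, for instance, $dJ_{2n+2}=T_{2n+2}$, while $cJ_{2n+2}+dJ_{2n+1}=T_{2n+2}+T_{2n+1}$ and $J_{2n+4}-aJ_{2n+3}=T_{2n+4}-T_{2n+3}$, and likewise for the remaining coefficients in the numerators and denominators. After these replacements the four displayed expressions for $x_{2n+1}$, $x_{2n+2}$, $y_{2n+1}$ and $y_{2n+2}$ drop out verbatim. There is no genuine obstacle in this corollary: the only point that warrants a sentence of justification is the uniqueness argument identifying $J_n$ with the Tetranacci number $T_n$, and everything else is a direct specialization of an already-proved result.
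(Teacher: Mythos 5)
Your proposal is correct and matches the paper's approach exactly: the paper likewise obtains this corollary as a direct specialization of the Main Theorem, having already noted that for $a=b=c=d=1$ the sequence $\{J_n\}$ coincides with the Tetranacci sequence $\{T_n\}$ (same recurrence, same initial values $0,0,1,1$). Your explicit uniqueness remark for the identification $J_n=T_n$ is the only detail the paper leaves implicit.
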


\begin{corollary} \label{solution_form_of_equation_tribonacci}
Let $\{ x_n \}_{n\geq -1}$ be a well defined solution of the equation \eqref{equation_tribonacci}. Then for $n \in \mathbb{N}_0$, we have
\begin{equation*}
x_{2n+1}=\dfrac{T_{2n+2}+\left( T_{2n+2}+T_{2n+1} \right) x_{-2} +\left( T_{2n+4}-T_{2n+3} \right) x_{-1}x_{-2} +T_{2n+3} x_{0}x_{-1}x_{-2} }{T_{2n+1} +\left( T_{2n+1}+T_{2n} \right) x_{-2} +\left( T_{2n+3}-T_{2n+2} \right) x_{-1}x_{-2}+T_{2n+2} x_{0}x_{-1}x_{-2}} ,
\end{equation*}

\begin{equation*}
x_{2n+2}=\dfrac{T_{2n+3}+\left( T_{2n+3}+T_{2n+2} \right) x_{-2} +\left( T_{2n+5}-T_{2n+4} \right) x_{-1}x_{-2} +T_{2n+4}x_{0}x_{-1}x_{-2}}{T_{2n+2}+\left( T_{2n+2}+T_{2n+1} \right) x_{-2} +\left( T_{2n+4}-T_{2n+3} \right) x_{-1}x_{-2} +T_{2n+3}x_{0}x_{-1}x_{-2} },
\end{equation*}
\end{corollary}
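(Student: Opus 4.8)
The plan is to obtain this corollary as a double specialization of the Main Theorem, routed through the already established Corollary \ref{solution_form_of_equation_generalized}. First I would recall that equation \eqref{equation_tribonacci} is nothing but equation \eqref{equation-generalized} with the parameter choice $a=b=c=d=1$. Consequently Corollary \ref{solution_form_of_equation_generalized} — which itself descends from the Main Theorem and expresses every well defined solution of \eqref{equation-generalized} through the auxiliary sequence $(J_n)$ — applies verbatim once these parameter values are inserted. The entire content of the present statement thus reduces to reading off what the two formulas of Corollary \ref{solution_form_of_equation_generalized} become under this substitution.

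The key step, and essentially the only point that requires verification, is the identification $J_n=T_n$ for all $n\in\mathbb{N}_0$. Setting $a=b=c=d=1$ in the defining relation \eqref{tribonacci_sequence_J(n)} turns it into $J_{n+4}=J_{n+3}+J_{n+2}+J_{n+1}+J_n$, while the prescribed initial data $J_0=0$, $J_1=0$, $J_2=1$, $J_3=a$ become $J_0=J_1=0$, $J_2=1$, $J_3=1$. These are precisely the recurrence and initial values defining the Tetranacci sequence $(T_n)$. By the uniqueness of the solution of a fourth order linear recurrence pinned down by four consecutive initial values, I conclude $J_n=T_n$ for every $n\geq 0$.

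With this identification in hand the proof finishes by direct substitution: in the two formulas of Corollary \ref{solution_form_of_equation_generalized} I put $a=c=d=1$ and replace each $J_k$ by $T_k$. The scalar coefficients $d$ and $c$ collapse to $1$, the factor $a$ appearing in every block $J_{2n+4}-aJ_{2n+3}$ and its index shifts collapses to $1$, and all the $J$'s turn into $T$'s, producing termwise exactly the displayed expressions for $x_{2n+1}$ and $x_{2n+2}$. Concerning the hypotheses, the well-definedness assumed in the statement is precisely what guarantees that the specialized denominators (the quantities $A_n$, $B_n$ of the Main Theorem evaluated at $a=b=c=d=1$, $J_n=T_n$ and on the diagonal $y_{-i}=x_{-i}$) are nonzero, so no fresh forbidden-set analysis is needed. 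I do not anticipate a genuine obstacle here, as the argument is a verification that the parameter specialization carrying \eqref{equation-generalized} to \eqref{equation_tribonacci} simultaneously carries $(J_n)$ to $(T_n)$; the one point deserving care is the matching of the initial value $J_3=a$ with $T_3=1$, which holds exactly because $a=1$.
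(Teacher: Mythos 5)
Your proposal is correct and matches the paper's own route: the paper likewise obtains this corollary by specializing the Main Theorem (equivalently Corollary \ref{solution_form_of_equation_generalized} on the diagonal $y_{-i}=x_{-i}$) at $a=b=c=d=1$, where the sequence $(J_n)$ of \eqref{tribonacci_sequence_J(n)} coincides with the Tetranacci numbers $(T_n)$ by the uniqueness argument you give. Your explicit check of the initial values $J_0=J_1=0$, $J_2=1$, $J_3=a=1$ against $T_0=T_1=0$, $T_2=T_3=1$ is exactly the identification the paper uses implicitly.
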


\begin{remark} When $a=d=0$, the system \eqref{system-generalized} takes the form

\begin{equation}\label{system_generalized_padovan}
   x_{n+1}=\dfrac{bx_{n-1}+c}{y_nx_{n-1}},\; y_{n+1}=\dfrac{by_{n-1}+c}{x_ny_{n-1}}
\; \; n\in \mathbb{N}_0.
\end{equation}
 As it is noted in \cite{arxiv}, the solutions are expressed using Padovan numbers. This system and same particular cases of it has been the subject of the papers \cite{Halim Rabago:2018}, \cite{Yazlik Tollu Taskara:2013}.

If $d=c=0$, The system \eqref{system-generalized} become

\begin{equation}\label{system_generalized_fibonaci}
   x_{n+1}=\dfrac{ay_{n}+b}{y_{n}},\; y_{n+1}=\dfrac{ax_{n}+b}{x_{n}},\, n\in \mathbb{N}_0.
\end{equation}
As it is noted in \cite{arxiv}:

- The  system \eqref{system_generalized_fibonaci} is a particular case of the more general system

\begin{equation}\label{ss}
 x_{n+1}=\dfrac{ay_{n}+b}{cy_{n}+d},\; y_{n+1}=\dfrac{\alpha x_{n}+\beta}{\gamma x_{n}+\lambda},\; n\in \mathbb{N}_0
\end{equation}
which was been completely solved by Stevic in \cite{stevicejqtde} and the solutions are expressed using a generalized Fibonacci sequence. 

- Also, particular cases of system \eqref{ss} has been studied in \cite{matsunaga}, \cite{Halim:2015}, \cite{Tollu Yazlik Taskara:2014(1)}, \cite{Tollu Yazlik Taskara:2013}.
\end{remark}

- We note that if also $b=0$, then the solutions of the system \eqref{system_generalized_fibonaci} are given by
$$\left\{\left(x_{0},y_{0}\right),\left(a,a\right),\left(a,a\right),...,\right\}.$$
%%%%%%%%%%%%%%%%%%%%%%%%%%%%%%%%%%%%%%%%%%%%%%%%%%%%%%%%%%%%%%%%%%%%%%%%%%%%%%%%%%%%%%%%%%%%%%%%%%%%%%%%%%%%%%%%%%%%%%%%
\section{Proof of the Main Theorem}

In order to solve the system \eqref{system-generalized}, we need firstly to solve the following two homogeneous forth order linear difference equations

\begin{equation}\label{third-order_equation_R(n)}
R_{n+1}=aR_{n}+bR_{n-1}+cR_{n-2}+dR_{n-3},\,n\in \mathbb{N}_0,
\end{equation}

\begin{equation} \label{third-order_equation_S(n)}
S_{n+1}=-aS_{n}+bS_{n-1}-cS_{n-2}+dS_{n-3},\,n\in \mathbb{N}_0,
\end{equation}
where the initial values $R_{0}, R_{-1}$, $R_{-2}$, $R_{-3}$, $S_{0}, S_{-1}$, $S_{-2}$ and $S_{-3}$  and the constant coefficients $a$, $b$, $c$ and $d$ are real numbers with $d\neq 0$. In fact we will express the terms of the sequences $\left(R_{n}\right)_{n=-3}^{+\infty}$ and  $\left(S_{n}\right)_{n=-3}^{+\infty}$ using the sequence $\left(J_{n}\right)_{n=0}^{+\infty}$.\\

The difference equation \eqref{third-order_equation_R(n)} has the same characteristic equation as $\left(J_{n}\right)_{n=0}^{+\infty}$, that is the equation \eqref{eq_charac_of_GTS}. \\
To solve the difference equation \eqref{third-order_equation_S(n)} using terms of \eqref{tribonacci_sequence_J(n)}, we need the following fourth order linear  difference equation defined by
\begin{equation}\label{tribonacci_sequence_j(n)}
j_{n+4}=-aj_{n+3}+bj_{n+2}-cj_{n+1}+dj_{n},\quad n\in \mathbb{N}_0,
\end{equation}
and the special initial values
\begin{equation}
j_0=0, \quad j_1=0, \quad j_2=1 \; \text{and} \; j_3=-a
\end{equation}

The characteristic equation of \eqref{third-order_equation_S(n)} and \eqref{tribonacci_sequence_j(n)} is
   \begin{equation}
\lambda^4+a\lambda^3-b\lambda^2+c\lambda-d=0.   \label{eq_charac_of_GTS1}
\end{equation}
Clearly the roots of  \eqref{eq_charac_of_GTS1} are $-\alpha$, $-\beta$, $-\gamma$ and $-\delta$. Now following the same procedure in solving $\{J(n)\}$, it is not hard to see that
$$j(n)=(-1)^{n}J(n).$$

Now, we are able to prove the following result.
\begin{lemma} \label{lemma_R(n),S(n)_by_J(n)}
We have for all $n \in \mathbb{N}_0$,
\begin{equation}\label{R(n)_by_J(n)}
R_n= dJ_{n+1}R_{-3}+\left( cJ_{n+1}+dJ_n \right) R_{-2}+\left( J_{n+3}-aJ_{n+2} \right) R_{-1}+J_{n+2}R_0
\end{equation}
\begin{equation}\label{S(n)_by_J(n)}
S_n= (-1)^{n+1}\left[ dJ_{n+1}S_{-3}-\left( cJ_{n+1}+dJ_n \right) S_{-2}+\left( J_{n+3}-aJ_{n+2} \right) S_{-1}-J_{n+2}S_0 \right].
\end{equation}
\end{lemma}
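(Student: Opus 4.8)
The plan is to prove each of \eqref{R(n)_by_J(n)} and \eqref{S(n)_by_J(n)} by exploiting the fact that, in each identity, both sides are solutions of one and the same fourth-order linear recurrence; agreement on four consecutive indices then forces agreement for all $n\geq 0$. I would treat \eqref{R(n)_by_J(n)} first and then reduce \eqref{S(n)_by_J(n)} to it by the substitution $j_n=(-1)^nJ_n$.

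\emph{Proof of \eqref{R(n)_by_J(n)}.} Let $\Phi(n)$ denote the right-hand side of \eqref{R(n)_by_J(n)}. First I would note that $(R_n)$ itself satisfies \eqref{tribonacci_sequence_J(n)}, since \eqref{third-order_equation_R(n)} can be rewritten as $R_{m+4}=aR_{m+3}+bR_{m+2}+cR_{m+1}+dR_m$. Next I would regroup $\Phi(n)$ by the index of $J$, obtaining
\[
\Phi(n)=dR_{-2}\,J_n+(dR_{-3}+cR_{-2})\,J_{n+1}+(R_0-aR_{-1})\,J_{n+2}+R_{-1}\,J_{n+3},
\]
which displays $\Phi$ as a fixed linear combination of the shifted sequences $n\mapsto J_{n+k}$, $k\in\{0,1,2,3\}$. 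Because each of these shifts satisfies \eqref{tribonacci_sequence_J(n)} for $n\geq 0$, so does $\Phi$. Since $(R_n)$ and $(\Phi(n))$ thus solve the same fourth-order recurrence on $n\geq0$, it suffices to verify $\Phi(n)=R_n$ at the four base cases $n=0,1,2,3$; the general equality then follows by a straightforward forward induction. These base cases reduce to short computations using $J_0=J_1=0$, $J_2=1$, $J_3=a$ together with $J_4=a^2+b$ and $J_5=a^3+2ab+c$ read off from \eqref{tribonacci_sequence_J(n)}; for instance one finds $\Phi(0)=R_0$ and $\Phi(1)=dR_{-3}+cR_{-2}+bR_{-1}+aR_0=R_1$.

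\emph{Proof of \eqref{S(n)_by_J(n)}.} I would run the identical argument for $(S_n)$, which by \eqref{third-order_equation_S(n)} and \eqref{tribonacci_sequence_j(n)} satisfies the same recurrence as $(j_n)$, whose characteristic equation is \eqref{eq_charac_of_GTS1}. This is precisely the template of the previous case with $a$ replaced by $-a$, $c$ by $-c$, and $J$ by $j$, so the same reasoning (collect by index of $j$, check four base cases) yields
\[
S_n=dj_{n+1}S_{-3}+(-cj_{n+1}+dj_n)S_{-2}+(j_{n+3}+aj_{n+2})S_{-1}+j_{n+2}S_0.
\]
I would then substitute the relation $j_n=(-1)^nJ_n$ recorded just above the lemma, use $(-1)^{n}=-(-1)^{n+1}$ and $(-1)^{n+2}=-(-1)^{n+1}$ to align every term, and factor $(-1)^{n+1}$ out of all four summands; this recovers \eqref{S(n)_by_J(n)} exactly.

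The computational core is routine, and the two places that demand care are the following. First, one must confirm that the regrouped $\Phi$ is genuinely annihilated by the same operator as $(R_n)$ on the valid index range $n\geq 0$, for it is this observation that lets four base cases replace a full-strength computation. Second, the main bookkeeping hazard is the sign management in the $S$-case: after the substitution $j_n=(-1)^nJ_n$ the alternating factor $(-1)^{n+1}$ must be extracted consistently from all four terms, and it is here that a stray sign would silently break the identity, so I would carry out this extraction term by term.
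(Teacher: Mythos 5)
Your proof is correct, but it takes a genuinely different route from the paper for the first identity \eqref{R(n)_by_J(n)}. The paper assumes the four roots of \eqref{eq_charac_of_GTS} are \emph{distinct}, writes $R_n=c'_1\alpha^n+c'_2\beta^n+c'_3\gamma^n+c'_4\delta^n$, solves the resulting $4\times 4$ linear system for the $c'_i$ by Cramer's rule, and then recognizes the coefficient blocks as $dJ_{n+1}$, $cJ_{n+1}+dJ_n$, $J_{n+3}-aJ_{n+2}$, $J_{n+2}$ via the Binet-type formulas; the repeated-root and complex-root configurations are declared ``similar'' and omitted. Your argument --- both sides of \eqref{R(n)_by_J(n)} are annihilated by the same fourth-order recurrence \eqref{tribonacci_sequence_J(n)} (since $\Phi$ is a fixed linear combination of the shifts $n\mapsto J_{n+k}$, $k=0,1,2,3$), so agreement at $n=0,1,2,3$ propagates by induction --- is uniform in the root configuration, so it covers in one stroke all nine cases the paper enumerates in its introduction, and in that sense it is actually more complete than the paper's own proof; the trade-off is that it is purely a verification and does not show how the formula is discovered, which is what the paper's Cramer computation provides. (One small bookkeeping point: the base case $n=3$ also requires $J_6$, not just $J_4$ and $J_5$, but that is the same kind of routine computation.) For the second identity \eqref{S(n)_by_J(n)} your route coincides with the paper's: both pass from \eqref{third-order_equation_S(n)} to \eqref{third-order_equation_R(n)} via $a\mapsto -a$, $c\mapsto -c$, and then substitute the parity relation between \eqref{tribonacci_sequence_j(n)} and \eqref{tribonacci_sequence_J(n)}. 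Note that you correctly use $j_n=(-1)^nJ_n$, as stated just before the lemma; the paper's proof text invokes ``$j(n)=(-1)^{n+1}J(n)$,'' which is a typo, since only the relation $j_n=(-1)^nJ_n$ is consistent with the initial values $j_2=J_2=1$ and yields \eqref{S(n)_by_J(n)} after the sign extraction you carry out term by term.
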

\begin{proof}

Assume that $\alpha$, $ \beta$, $\gamma$  and $\delta$ are the distinct roots of the characteristic  equation  \eqref{eq_charac_of_GTS}, so
\begin{equation*}
R_{n}=c'_{1}\alpha^{n}+c'_{2}\beta^{n}+c'_{3}\gamma^{n}+c'_{4}\delta^n,\,n\geq -3.
\end{equation*}
Using the initial values $R_{0}, R_{-1}$, $R_{-2}$ and $R_{-3}$, we get
\begin{equation} \label{roots_and__c(i)_relations_of_R(n)}
\begin{cases}
\dfrac{1}{\alpha^3}c'_1 + \dfrac{1}{\beta^3}c'_2 + \dfrac{1}{\gamma^3}c'_3+ \dfrac{1}{\delta^3}c'_4 &=R_{-3} \\
\dfrac{1}{\alpha^2}c'_1 + \dfrac{1}{\beta^2}c'_2 + \dfrac{1}{\gamma^2}c'_3 +\dfrac{1}{\delta^2}c'_4 &=R_{-2} \\
\dfrac{1}{\alpha}c'_1 + \dfrac{1}{\beta}c'_2 + \dfrac{1}{\gamma}c'_3 + \dfrac{1}{\delta}c'_4 &=R_{-1} \\
c'_1+c'_2+c'_3+c'_4 &=R_0
\end{cases}
\end{equation}
after some  calculations using the Cramer method we get
\begin{eqnarray*}
c'_1 & = & \dfrac{\beta \gamma \delta \alpha^3}{(\delta-\alpha)(\gamma-\alpha)(\beta-\alpha)}R_{-3} - \dfrac{(\gamma\beta+\gamma\delta+\beta\delta)\alpha^3}{(\delta-\alpha)(\gamma-\alpha)(\beta-\alpha)} R_{-2}\\
         &&+  \dfrac{(\beta+\gamma+\delta)\alpha^3}{(\delta-\alpha)(\gamma-\alpha)(\beta-\alpha)}R_{-1}-\dfrac{\alpha^3}{(\delta-\alpha)(\gamma-\alpha)(\beta-\alpha)}R_0 \\
c'_2 & = & -\dfrac{\alpha \gamma \delta \beta^3}{(\delta-\beta)(\gamma-\beta)(\beta-\alpha)}R_{-3} + \dfrac{(\gamma\alpha+\gamma\delta+\alpha\delta)\beta^3}{(\delta-\beta)(\gamma-\beta)(\beta-\alpha)} R_{-2}\\
         && -  \dfrac{(\alpha+\gamma+\delta)\beta^3}{(\delta-\beta)(\gamma-\beta)(\beta-\alpha)}R_{-1}+\dfrac{\beta^3}{(\delta-\beta)(\gamma-\beta)(\beta-\alpha)}R_0 \\
c'_3 & = & \dfrac{\alpha\beta\delta \gamma^3}{(\delta-\gamma)(\gamma-\beta)(\gamma-\alpha)}R_{-3} - \dfrac{(\alpha\beta+\alpha\delta+\beta\delta)\gamma^3}{(\delta-\gamma)(\gamma-\beta)(\gamma-\alpha)} R_{-2}\\
         &&+ \dfrac{(\alpha+\beta+\delta)\gamma^3}{(\delta-\gamma)(\gamma-\beta)(\gamma-\alpha)} R_{-1}-\dfrac{\gamma^3}{(\delta-\gamma)(\gamma-\beta)(\gamma-\alpha)}R_0 \\
c'_4 & = &-\dfrac{\alpha\beta\gamma \delta^3}{(\delta-\gamma)(\delta-\beta)(\delta-\alpha)}R_{-3} + \dfrac{(\gamma\alpha+\gamma\beta+\alpha\beta)\delta^3}{(\delta-\gamma)(\delta-\beta)(\delta-\alpha)} R_{-2}\\
         && -  \dfrac{(\alpha+\beta+\gamma)\delta^3}{(\delta-\gamma)(\delta-\beta)(\delta-\alpha)} R_{-1}+\dfrac{\delta^3}{(\delta-\gamma)(\delta-\beta)(\delta-\alpha)}R_0
\end{eqnarray*}
that is,
\begin{eqnarray*}
R_n & = & \left(\dfrac{\beta \gamma \delta \alpha^3}{(\delta-\alpha)(\gamma-\alpha)(\beta-\alpha)}\alpha^n -\dfrac{\alpha \gamma \delta \beta^3}{(\delta-\beta)(\gamma-\beta)(\beta-\alpha)} \beta^n+\dfrac{\alpha\beta\delta \gamma^3}{(\delta-\gamma)(\gamma-\beta)(\gamma-\alpha)} \gamma^n \right. \\
&&\left. -\dfrac{\alpha\beta\gamma \delta^3}{(\delta-\gamma)(\delta-\beta)(\delta-\alpha)} \delta^n  \right) R_{-3} \\
 && +\: \left(- \dfrac{(\gamma\beta+\gamma\delta+\beta\delta)\alpha^3}{(\delta-\alpha)(\gamma-\alpha)(\beta-\alpha)} \alpha^n +\dfrac{(\gamma\alpha+\gamma\delta+\alpha\delta)\beta^3}{(\delta-\beta)(\gamma-\beta)(\beta-\alpha)}\beta^n - \dfrac{(\alpha\beta+\alpha\delta+\beta\delta)\gamma^3}{(\delta-\gamma)(\gamma-\beta)(\gamma-\alpha)} \gamma^n \right. \\
&&\left. + \dfrac{(\gamma\alpha+\gamma\beta+\alpha\beta)\delta^3}{(\delta-\gamma)(\delta-\beta)(\delta-\alpha)}\delta^n \right) R_{-2} \\
&& +\: \left( \dfrac{(\beta+\gamma+\delta)\alpha^3}{(\delta-\alpha)(\gamma-\alpha)(\beta-\alpha)}\alpha^n - \dfrac{(\alpha+\gamma+\delta)\beta^3}{(\delta-\beta)(\gamma-\beta)(\beta-\alpha)}\beta^n+ \dfrac{(\alpha+\beta+\delta)\gamma^3}{(\delta-\gamma)(\gamma-\beta)(\gamma-\alpha)}\gamma^n \right. \\
&&\left. - \dfrac{(\alpha+\beta+\gamma)\delta^3}{(\delta-\gamma)(\delta-\beta)(\delta-\alpha)} \delta^n \right) R_{-1} \\
&& +\: \left(-\dfrac{\alpha^3}{(\delta-\alpha)(\gamma-\alpha)(\beta-\alpha)}\alpha^n +\dfrac{\beta^3}{(\delta-\beta)(\gamma-\beta)(\beta-\alpha)}\beta^n -\dfrac{\gamma^3}{(\delta-\gamma)(\gamma-\beta)(\gamma-\alpha)}\gamma^n \right. \\
&&\left. +\dfrac{\delta^3}{(\delta-\gamma)(\delta-\beta)(\delta-\alpha)} \delta^n \right) R_{0}.
\end{eqnarray*}
\begin{equation*}
R_n= dJ_{n+1}R_{-3}+\left( cJ_{n+1}+dJ_n \right) R_{-2}+\left( J_{n+3}-aJ_{n+2} \right) R_{-1}+J_{n+2}R_0.
\end{equation*}
The proof of the other cases is similar and will be omitted.

Let $A:=-a$, $B:=b$, $C:=-c$ and $D:=d$ then equation \eqref{third-order_equation_S(n)} takes the form of \eqref{third-order_equation_R(n)} and the equation \eqref{tribonacci_sequence_j(n)} takes the form of \eqref{tribonacci_sequence_J(n)}. Then analogous to the formula of \eqref{third-order_equation_R(n)} we obtain  \begin{equation*}
S_n=  Dj_{n+1}S_{-3}+\left( Cj_{n+1}+Dj_n \right) S_{-2}+\left( j_{n+3}-Aj_{n+2} \right) S_{-1}+j_{n+2}S_0.
\end{equation*}
Using the fact that  $j(n)=(-1)^{n+1}J(n)$, $A=-a$ and $C:=-c$ we get
\begin{equation*}
S_n= (-1)^{n+1}\left[ dJ_{n+1}S_{-3}-\left( cJ_{n+1}+dJ_n \right) S_{-2}+\left( J_{n+3}-aJ_{n+2} \right) S_{-1}-J_{n+2}S_0 \right].
\end{equation*}
\end{proof}

\emph{\textbf{Proof of the Main Theorem.}}

Putting
\begin{equation} \label{changing_variable_x(n)_y(n)_by_u(n)_v(n)}
x_{n}=\dfrac{u_{n}}{v_{n-1}},\quad y_{n}=\dfrac{v_{n}}{u_{n-1}},\,n \geq -2.
\end{equation}
we get the following linear forth order system of  difference
equations
\begin{equation}\label{sys_linear}
u_{n+1}=av_{n}+bu_{n-1}+cv_{n-2}+du_{n-3},\quad
v_{n+1}=au_{n}+bv_{n-1}+cu_{n-2}+dv_{n-3},\quad n \in \mathbb{N}_0,
\end{equation}
where the initial values $u_{-3},u_{-2},u_{-1},u_0, v_{-3}, v_{-2},v_{-1},v_0$ are nonzero real numbers.\\
From\eqref{sys_linear} we have for $n \in \mathbb{N}_0$,
\begin{equation*}
\begin{cases}
u_{n+1}+v_{n+1}=a(v_{n}+u_{n})+b(u_{n-1}+v_{n-1})+c(v_{n-2}+u_{n-2})+d(u_{n-3}+v_{n-3}),\\
u_{n+1}-v_{n+1}=a(v_{n}-u_{n})+b(u_{n-1}-v_{n-1})+c(v_{n-2}-u_{n-2})+d(u_{n-3}-v_{n-3}).
\end{cases}
\end{equation*}
Putting again
\begin{equation}\label{u,v}
R_n=u_n+v_n, \quad S_n=u_n-v_n,\,n\geq -2,
\end{equation}\label{R,S}
we obtain two homogeneous linear difference equations of forth order:
\begin{equation*}
R_{n+1}=aR_{n}+bR_{n-1}+cR_{n-2}+dR_{n-3},\, n \in \mathbb{N}_0,
\end{equation*}
and
\begin{equation} \label{S(n)_equation}
S_{n+1}=-aS_{n}+bS_{n-1}-cS_{n-2}+dS_{n-3},\, n \in \mathbb{N}_0.
\end{equation}

Using \eqref{u,v}, we get for $n \geq -3$,
\begin{equation*}
u_n= \dfrac{1}{2}(R_n+S_n), \; v_n= \dfrac{1}{2}(R_n-S_n).
\end{equation*}
From Lemma \ref{lemma_R(n),S(n)_by_J(n)} we obtain,

\begin{eqnarray} \label{solution_form_of_u(n)}
\begin{cases}
u_{2n-1}=&\dfrac{1}{2}\left[ dJ_{2n}(R_{-3}+S_{-3})+\left( cJ_{2n}+dJ_{2n-1} \right) (R_{-2}-S_{-2}) +\left( J_{2n+2}-aJ_{2n+1} \right) (R_{-1}+S_{-1}) \right. \\
                &\left. +J_{2n+1}(R_0-S_0)  \right], n \in \mathbb{N},\\
u_{2n}=&\dfrac{1}{2}\left[dJ_{2n+1}(R_{-3}-S_{-3})+\left( cJ_{2n+1}+dJ_{2n} \right) (R_{-2}+S_{-2}) +\left( J_{2n+3}-aJ_{2n+2} \right) (R_{-1}-S_{-1}) \right. \\
                &\left. +J_{2n+2}(R_0+S_0)  \right], n \in \mathbb{N}_0,
\end{cases}
\end{eqnarray}
\begin{eqnarray}
\begin{cases} \label{solution_form_of_v(n)}
v_{2n-1}=&\dfrac{1}{2}\left[ dJ_{2n}(R_{-3}-S_{-3})+\left( cJ_{2n}+dJ_{2n-1} \right) (R_{-2}+S_{-2}) +\left( J_{2n+2}-aJ_{2n+1} \right) (R_{-1}-S_{-1}) \right. \\
                &\left. +J_{2n+1}(R_0+S_0)  \right], n \in \mathbb{N},\\
v_{2n}=&\dfrac{1}{2}\left[dJ_{2n+1}(R_{-3}+S_{-3})+\left( cJ_{2n+1}+dJ_{2n} \right) (R_{-2}-S_{-2}) +\left( J_{2n+3}-aJ_{2n+2} \right) (R_{-1}+S_{-1}) \right. \\
                &\left. +J_{2n+2}(R_0-S_0)  \right], n \in \mathbb{N}_0,
\end{cases}
\end{eqnarray}
%%%%%%%%%%%%%%%%%%%%%%%%%%%%%%%%%%%
Substituting \eqref{solution_form_of_u(n)} and \eqref{solution_form_of_v(n)} in \eqref{changing_variable_x(n)_y(n)_by_u(n)_v(n)}, we get for $n \in \mathbb{N}_0$,

\begin{equation}\label{x(na)}
x_{2n+1}=\dfrac{dJ_{2n+2}+\left( cJ_{2n+2}+dJ_{2n+1} \right) \dfrac{R_{-2}-S_{-2}}{R_{-3}+S_{-3}} +\left( J_{2n+4}-aJ_{2n+3} \right) \dfrac{R_{-1}+S_{-1}}{R_{-3}+S_{-3}}+J_{2n+3} \dfrac{R_0-S_0}{R_{-3}+S_{-3}}}{dJ_{2n+1} +\left( cJ_{2n+1}+dJ_{2n} \right) \dfrac{R_{-2}-S_{-2}}{R_{-3}+S_{-3}} +\left( J_{2n+3}-aJ_{2n+2} \right) \dfrac{R_{-1}+S_{-1}}{R_{-3}+S_{-3}}+J_{2n+2} \dfrac{R_0-S_0}{R_{-3}+S_{-3}}},
\end{equation}

\begin{equation}\label{x(nb)}
x_{2n+2}=\dfrac{dJ_{2n+3}+\left( cJ_{2n+3}+dJ_{2n+2} \right) \dfrac{R_{-2}+S_{-2}}{R_{-3}-S_{-3}} +\left( J_{2n+5}-aJ_{2n+4} \right) \dfrac{R_{-1}-S_{-1}}{R_{-3}-S_{-3}}+J_{2n+4}\dfrac{R_0+S_0}{R_{-3}-S_{-3}}}{dJ_{2n+2}+\left( cJ_{2n+2}+dJ_{2n+1} \right) \dfrac{R_{-2}+S_{-2}}{R_{-3}-S_{-3}} +\left( J_{2n+4}-aJ_{2n+3} \right) \dfrac{R_{-1}-S_{-1}}{R_{-3}-S_{-3}}+J_{2n+3} \dfrac{R_0+S_0}{R_{-3}-S_{-3}}},
\end{equation}

\begin{equation}\label{y(na)}
y_{2n+1}=\dfrac{dJ_{2n+2}+\left( cJ_{2n+2}+dJ_{2n+1} \right) \dfrac{R_{-2}+S_{-2}}{R_{-3}-S_{-3}} +\left( J_{2n+4}-aJ_{2n+3} \right) \dfrac{R_{-1}-S_{-1}}{R_{-3}-S_{-3}}+J_{2n+3} \dfrac{R_0+S_0}{R_{-3}-S_{-3}}}{dJ_{2n+1}+\left( cJ_{2n+1}+dJ_{2n} \right) \dfrac{R_{-2}+S_{-2}}{R_{-3}-S_{-3}} +\left( J_{2n+3}-aJ_{2n+2} \right) \dfrac{R_{-1}-S_{-1}}{R_{-3}-S_{-3}}+J_{2n+2} \dfrac{R_0+S_0}{R_{-3}-S_{-3}}},
\end{equation}
and
\begin{equation}\label{y(nb)}
y_{2n+2}=\dfrac{dJ_{2n+3}+\left( cJ_{2n+3}+dJ_{2n+2} \right) \dfrac{R_{-2}-S_{-2}}{R_{-3}+S_{-3}} +\left( J_{2n+5}-aJ_{2n+4} \right) \dfrac{R_{-1}+S_{-1}}{R_{-3}+S_{-3}}+J_{2n+4}\dfrac{R_0-S_0}{R_{-3}+S_{-3}}}{dJ_{2n+2}+\left( cJ_{2n+2}+dJ_{2n+1} \right) \dfrac{R_{-2}-S_{-2}}{R_{-3}+S_{-3}} +\left( J_{2n+4}-aJ_{2n+3} \right) \dfrac{R_{-1}+S_{-1}}{R_{-3}+S_{-3}}+J_{2n+3} \dfrac{R_0-S_0}{R_{-3}+S_{-3}}}.
\end{equation}

We have
\begin{equation} \label{xrs}
x_{-2}=\dfrac{u_{-2}}{v_{-3}}=\dfrac{R_{-2}+S_{-2}}{R_{-3}-S_{-3}},\,  x_{-1}=\dfrac{u_{-1}}{v_{-2}}=\dfrac{R_{-1}+S_{-1}}{R_{-2}-S_{-2}},\,x_0=\dfrac{u_0}{v_{-1}}=\dfrac{R_{0}+S_{0}}{R_{-1}-S_{-1}},
  \end{equation}
  \begin{equation} \label{yrs}
y_{-2}=\dfrac{v_{-2}}{u_{-3}}=\dfrac{R_{-2}-S_{-2}}{R_{-3}+S_{-3}},\,  y_{-1}=\dfrac{v_{-1}}{u_{-2}}=\dfrac{R_{-1}-S_{-1}}{R_{-2}+S_{-2}},\, y_0=\dfrac{v_0}{u_{-1}}=\dfrac{R_{0}-S_{0}}{R_{-1}+S_{-1}}
\end{equation}
From \eqref{xrs}, \eqref{yrs} we get,

\begin{equation}\label{xyrs1}
\begin{cases}
\dfrac{R_{-1}+S_{-1}}{R_{-3}+S_{-3}}=\dfrac{R_{-1}+S_{-1}}{R_{-2}-S_{-2}} \times \dfrac{R_{-2}-S_{-2}}{R_{-3}+S_{-3}}= x_{-1}y_{-2} \\
\dfrac{R_{0}-S_{0}}{R_{-3}+S_{-3}}= \dfrac{R_{0}-S_{0}}{R_{-1}+S_{-1}} \times \dfrac{R_{-1}+S_{-1}}{R_{-2}-S_{-2}} \times \dfrac{R_{-2}-S_{-2}}{R_{-3}+S_{-3}} =y_0x_{-1}y_{-2}
\end{cases}
\end{equation}
\begin{equation}\label{xyrs2}
\begin{cases}
\dfrac{R_{-1}-S_{-1}}{R_{-3}-S_{-3}}=\dfrac{R_{-1}-S_{-1}}{R_{-2}+S_{-2}} \times \dfrac{R_{-2}+S_{-2}}{R_{-3}-S_{-3}}= y_{-1}x_{-2} \\
\dfrac{R_{0}+S_{0}}{R_{-3}-S_{-3}}= \dfrac{R_{0}+S_{0}}{R_{-1}-S_{-1}} \times \dfrac{R_{-1}-S_{-1}}{R_{-2}+S_{-2}} \times \dfrac{R_{-2}+S_{-2}}{R_{-3}-S_{-3}} =x_0y_{-1}x_{-2}
\end{cases}
\end{equation}
Using \eqref{x(na)}, \eqref{x(nb)}, \eqref{y(na)}, \eqref{y(nb)}, \eqref{xyrs1} and \eqref{xyrs2}, we obtain the closed form of the solutions of the system \eqref{system-generalized}, that is for $n \in \mathbb{N}_0$, we have
\begin{eqnarray*}
\begin{cases}
x_{2n+1}=&\dfrac{dJ_{2n+2}+\left( cJ_{2n+2}+dJ_{2n+1} \right) y_{-2} +\left( J_{2n+4}-aJ_{2n+3} \right) x_{-1}y_{-2} +J_{2n+3} y_{0}x_{-1}y_{-2} }{ dJ_{2n+1} +\left( cJ_{2n+1}+dJ_{2n} \right) y_{-2} +\left( J_{2n+3}-aJ_{2n+2} \right) x_{-1}y_{-2}+J_{2n+2} y_{0}x_{-1}y_{-2}} ,\\
x_{2n+2}=&\dfrac{dJ_{2n+3}+\left( cJ_{2n+3}+dJ_{2n+2} \right) x_{-2} +\left( J_{2n+5}-aJ_{2n+4} \right) y_{-1}x_{-2} +J_{2n+4}x_{0}y_{-1}x_{-2}}{dJ_{2n+2}+\left( cJ_{2n+2}+dJ_{2n+1} \right) x_{-2} +\left( J_{2n+4}-aJ_{2n+3} \right) y_{-1}x_{-2} +J_{2n+3}x_{0}y_{-1}x_{-2} },
\end{cases}
\end{eqnarray*}
\begin{eqnarray*}
\begin{cases}
y_{2n+1}=&\dfrac{dJ_{2n+2}+\left( cJ_{2n+2}+dJ_{2n+1} \right) x_{-2} +\left( J_{2n+4}-aJ_{2n+3} \right) y_{-1}x_{-2} +J_{2n+3}x_{0}y_{-1}x_{-2}}{dJ_{2n+1}+\left( cJ_{2n+1}+dJ_{2n} \right) x_{-2} +\left( J_{2n+3}-aJ_{2n+2} \right) y_{-1}x_{-2}+J_{2n+2} x_{0}y_{-1}x_{-2} },\\
y_{2n+2}=&\dfrac{dJ_{2n+3}+\left( cJ_{2n+3}+dJ_{2n+2} \right) y_{-2} +\left( J_{2n+5}-aJ_{2n+4} \right) x_{-1}y_{-2} +J_{2n+4} y_{0}x_{-1}y_{-2}}{dJ_{2n+2}+\left( cJ_{2n+2}+dJ_{2n+1} \right) y_{-2} +\left( J_{2n+4}-aJ_{2n+3} \right) x_{-1}y_{-2}+J_{2n+3} y_{0}x_{-1}y_{-2}}.
\end{cases}
\end{eqnarray*}
%------------------------------------------------------

%
%
\bibliography{mmnsample}
\bibliographystyle{mmn}

\end{document}